\documentclass[12pt,a4paper]{amsart}
\usepackage{dsfont}
\usepackage{graphicx}
\usepackage{amssymb}
\usepackage[all]{xy}
\usepackage{amsmath}
\usepackage{tikz}
\usepackage[french,english]{babel}
\usepackage{amssymb,amsmath,amsfonts,amsthm,amscd}
\usepackage{indentfirst,graphicx}
\usepackage[font={scriptsize},captionskip=5pt]{subfig}
\usepackage[T1]{fontenc}

\newtheorem{theorem}{Theorem}[section]

\newtheorem{proposition}[theorem]{Proposition}

\newtheorem{definition}[theorem]{Definition}

\newtheorem{remark}[theorem]{Remark}
\newtheorem{example}[theorem]{Example}

 \let\:=\colon  
   \let\?=\overline

\def\and{\hbox{ and }}

\let\:=\colon  
   \let\?=\overline

\let\Sum=\sum \def\sum{\Sum\nolimits}

\def\and{\hbox{ and }}

\def\DONE{*!*}
\def\NextDef #1 {\def\NextOne{#1}
 \ifx\NextOne\DONE\let\next\relax
 \else\expandafter\xdef\csname#1\endcsname{\TheOp}
  \let\next\NextDef
 \fi \next}
\def\TheOp{\mathop{\rm\NextOne}}
 \NextDef 
  Projan Supp Proj Sym Spec Hom cod Ker dist
 *!*
% quite a number of \mathcal 's

\newcommand{\cH}{{\mathcal H}}
\newcommand{\cI}{{\mathcal I}}

\newcommand{\cO}{{\mathcal O}}

\newcommand{\cR}{{\mathcal R}}

\newcommand{\bC}{{\mathbb{C}}}

\def\TheOp{\hbox{\rm\NextOne}}
\NextDef 
 A ICIS 
 *!*

\setlength{\textwidth}{153mm}
\hoffset=-6.7mm

\begin{document}

\title{THE BI-LIPSCHITZ EQUISINGULARITY OF ESSENTIALLY ISOLATED DETERMINANTAL SINGULARITIES}

\author{Thiago F. da Silva}
\author{Nivaldo G. Grulha Jr.}
\author{Miriam S. Pereira}

\maketitle

\begin{abstract}
{\small The bi-Lipschitz geometry is one of the main subjects in the modern approach of Singularity Theory. However, it rises from works of important mathematicians of the last century, especially Zariski.	In this work we investigate the Bi-Lipschitz equisingularity of families of Essentially Isolated Determinantal Singularities inspired by the approach of Mostowski and Gaffney.}
\end{abstract}

\let\thefootnote\relax\footnote{2010 \textit{Mathematics Subjects Classification} 32S15, 14J17, 32S60

\textit{Key words and phrases.}Bi-Lipschitz Equisingularity, Essentially Isolated Determinantal Singularities, 1-unfoldings, Finite Determinacy, Canonical vector fields}

\vspace{1cm}

\section*{Introduction}

\vspace{1cm}

The study of bi-Lipschitz equisingularity was started at the end of 1960's with works of Zariski \cite{Za}, Pham \cite{Pham} and Teissier \cite{PT}. At the end of 1980's (see \cite{M1}), Mostowski introduced a new viewpoint for the study of Bi-Lipschitz equisingularity by the existence of Lipschitz stratified vector fields, i.e, every Lipschitz vector field tangent to strata of a stratification can be extended to a Lipschitz vector field defined on the ambient space and tangent to strata. In that work Mostowski has showed that every analytic variety admits a Lipschitz stratification with a such vector field, however, this vector field is not obtained in a canonical way. 

More recently, in \cite{G1}, Gaffney presented conditions ensuring that a family of irreducible curves has a canonical vector field which is Lipschitz, namely

\begin{center}
$\frac{\partial}{\partial y} + \sum\limits_{i}\frac{\partial \tilde{f}_i}{\partial y}\cdot\frac{\partial}{\partial z_i}$
\end{center}

\noindent where $\widetilde{F}:\bC\times \bC \longrightarrow \bC\times \bC^n$ given by $\widetilde{F}(y,t)=(y,\tilde{f}(y,t))$ defines the family of curves. In this case, the main condition is the multiplicity of the pair $e(I_D(\tilde{F}),I_{\Delta})$ to be independent of the parameter $y$, where $I_{\Delta}$ is the ideal defining the diagonal on $\bC\times\bC$ and $I_D(\tilde{F})$ is the ideal generated by the doubles of the components of $\tilde{F}$, defined in section 2, Definition \ref{D2.1}. 

In this work we present conditions which ensure the above vector field is Lipschitz in the context of determinantal varieties. Following the approach of Pereira and Ruas \cite{RP}, for 1-unfoldings   $\tilde{F}:\bC\times\bC^q\longrightarrow \bC\times Hom(\bC^m,\bC^n)$ written as $$\tilde{F}(y,x)=(y,F(x)+y\theta(x))$$

\noindent we show that if $\theta$ is constant then the above vector field is always Lipschitz. If $\theta$ is not constant, we have examples in both cases.

\vspace{1cm}

\section*{Acknowledgements}

The authors are grateful to Terence Gaffney and Maria Aparecida Soares Ruas for the inspiration and support for this work, to David Trotman for his careful reading and valuable comments, to Anne Fr\"uhbis-Kr\"uger, for her comments and suggestions which provided the improvement of this paper, mainly in Theorem 2.7 and by the remark that appears here as Remark 2.8 and to the referee for the excellent suggestions which improved this work.

The first author was supported by Funda\c{c}\~ao de Amparo \`a Pesquisa do Estado de S\~ao Paulo - FAPESP, Brazil, grant 2013/22411-2. The second author was partially supported by Funda\c{c}\~ao de Amparo \`a Pesquisa do Estado de S\~ao Paulo - FAPESP, Brazil, grant 2017/09620-2 and Conselho Nacional de Desenvolvimento Cient\'ifico e Tecnol\'ogico - CNPq, Brazil, grant 303046/2016-3. The third author was supported by Proex ICMC/USP in a visit to S\~ao Carlos, where part of this work was developed.

\vspace{1cm}
 
\section{Determinantal Varieties}

We first recall the definition of determinantal varieties. Let $\Sigma_t\subset Hom(\mathbb{C}^n,\mathbb{C}^{p})$ be the subset
consisting of the maps that have rank less than $t$, with $1\leq t\leq
\min(n,p)$. It is possible to show that $\Sigma_t$ is an irreducible
singular algebraic variety of codimension $(n-t+1)(p-t+1)$ (see
\cite{Bruns}). Moreover the singular set of $\Sigma_t$ is exactly
$\Sigma_{t-1}$. The set $\Sigma_t$ is called a \textit{generic
determinantal variety} of size $(n,p)$ from $t\times t$ minors.

The representation of the variety $\Sigma_t$ as the union $\Sigma_{i}\setminus\Sigma_{i-1}$, $i=1,\,\ldots,t$ is a stratification of $\Sigma_t$, which is locally holomorphically trivial. This is called the \textbf{rank stratification} of $\Sigma_t$.

\begin{definition}
Let $U\subset \mathbb{C}^r$ be an open domain, $F=(m_{ij}(x))$ be a $n\times p$ matrix whose entries are
complex analytic functions on $U$, $0\in U$ and $f$
the function defined by the $t\times t$ minors of $F$. We say that
$X=V(f)$ is a determinantal variety if  it has codimension $(n-t+1)(p-t+1)$.
\end{definition}
 
Currently, determinantal varieties have been an important object of study in Singularity Theory. For example, we can refer to the works of Damon \cite{DP}, Fr\"uhbis-Kr\"uger \cite{FN,FK}, Gaffney \cite{G1,GGR}, Grulha \cite{GGR}, Nu\~no-Ballesteros \cite{ANOT,NOT}, Or\'efice-Okamoto \cite{ANOT,NOT}, Pereira \cite{Miriam,RP}, Pike \cite{DP}, Ruas \cite{GGR,RP}, Tomazella \cite{ANOT,NOT}, Zhang \cite{Zhang} and others.  

In the case where $X$ is a codimension two
determinantal variety, we can use the Hilbert-Burch theorem to
obtain a good description of $X$ and its deformations in terms of
its presentation matrix. In fact, if $X$ is a codimension two Cohen-
Macaulay variety, then $X$ can be defined by the maximal minors of a
$n\times (n+1)$ matrix. Moreover, any perturbation of a  
$n\times (n+1)$ matrix gives rise to a deformation of $X$ and any
deformation of $X$ can be obtained through a perturbation of the
presentation matrix (see \cite{Sch}).
We can use this correspondence to study properties of codimension two  Cohen-Macaulay varieties through their presentation matrix. This is the approach of Fr\"ubis-Kr\"uger, Pereira and Ruas.

In order to introduce the notion of EIDS and relate it with the classical approach of singularity theory, let us recall some concepts in this field.

Let $\cR$ be the group of coordinate changes (on the source) in $(\bC^r,0)$. We denote $GL_i(\cO_r)$ the group of invertible matrices of size $i\times i$ with entries in the local ring $\cO_r$. Consider the group $\cH:=GL_p(\cO_r)\times GL_n(\cO_r)$.

Given two matrices, we are interested in
studying these germs according to the following equivalence relation.

\begin{definition}
Let ${\mathcal{G}}:={\mathcal{R}}\ltimes {\mathcal{H}}$ be the semi-direct product of ${\mathcal{R}}$ and ${\mathcal{H}}$. We say that two germs $F_1,
\,\ F_2\in Mat_{\textit{(n,p)}}({\mathcal{O}}_r)$ are
$\mathcal{G}$-equivalent if there exist $(\phi,R,L)\in
{\mathcal{G}}$ such that $F_1=L^{-1}(\phi^*F_2)R$.
\end{definition}

It is not difficult to see that $\mathcal{G}$ is one of Damon's
geometric subgroups of $\mathcal{K}$ (see \cite{Miriam}), hence as a consequence of Damon's result (\cite{Damon})
we can use the techniques of singularity theory, for instance, those
concerning finite determinacy. The notions of ${\mathcal
{G}}$-equivalence and $ {\mathcal {K}} _ {\Delta} $-equivalence, where $
\Delta $ consists of the subvariety of matrices of rank less than
the maximal rank \cite {Damon}, coincide for finitely determined
germs (see \cite{Bruce}).

The next result is a Geometric Criterion of Finite Determinacy for families of $n\times p$ matrices and was proved in \cite{Miriam}.

\begin{theorem}\label{3.2}\label{detfin}(\textbf{Geometric Criterion of Finite Determinacy})
A representative of a germ $F:\mathbb{C}^r,0\longrightarrow Mat_{(n,p)}(\mathbb{C})$ is
$\mathcal{G}$-finitely determined if and only if $F$ is transverse to
the strata of the rank stratification of $Mat_{(n,p)}(\mathbb{C})$ outside the
origin.
\end{theorem}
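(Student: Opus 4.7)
The plan is to combine two classical ingredients. The first is Damon's theorem on geometric subgroups of $\mathcal{K}$ (see \cite{Damon}): since $\mathcal{G} = \mathcal{R}\ltimes\mathcal{H}$ is such a geometric subgroup, finite determinacy is equivalent to the finiteness of the $\mathcal{G}_e$-codimension, that is, to $\dim_{\bC}\,N\mathcal{G}_e\cdot F<\infty$. The second is the identification recalled just before the statement, namely that $\mathcal{G}$-equivalence coincides with $\mathcal{K}_{\Delta}$-equivalence (with $\Delta=\Sigma_{\min(n,p)}\subset Mat_{(n,p)}(\bC)$ the variety of non-maximal rank matrices) for finitely determined germs \cite{Bruce}. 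The strategy is then to reinterpret $N\mathcal{K}_{\Delta,e}\cdot F$ as the stalk at $0$ of a coherent $\cO_r$-sheaf $\cN$, and to prove that the vanishing of $\cN_x$ at a point $x$ near the origin is equivalent to the transversality of $F$ at $x$ to the rank stratum of $Mat_{(n,p)}(\bC)$ through $F(x)$.

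For the $(\Leftarrow)$ direction, assume $F$ is transverse to every stratum $\Sigma_i\setminus\Sigma_{i-1}$ outside the origin. The key computation is that the module of logarithmic vector fields along $\Delta$ (the liftable vector fields of $\mathcal{K}_{\Delta}$) is generated, along each stratum $\Sigma_i\setminus\Sigma_{i-1}$, by the infinitesimal action of $GL_n(\cO_r)\times GL_p(\cO_r)$, which spans precisely the tangent space to the $GL_n\times GL_p$-orbit, i.e.\ to the rank stratum. Hence transversality at $x\neq 0$ means $dF(x)(T_x\bC^r)$ together with the tangent space to the rank stratum at $F(x)$ span $Mat_{(n,p)}(\bC)$, which is exactly the algebraic vanishing of $\cN_x$. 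Coherence then forces $\Supp\cN\subset\{0\}$, so $\cN_0$ has finite length, giving finite $\mathcal{G}_e$-codimension and finite determinacy by Damon's theorem.

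For the $(\Rightarrow)$ direction, assume $F$ is $\mathcal{G}$-finitely determined. By Damon's theorem $\cN_0$ is finite dimensional over $\bC$, so the coherent sheaf $\cN$ has support contained in a discrete set near $0$, and hence is concentrated at the origin itself after possibly shrinking the representative. By the same identification between the vanishing of $\cN_x$ and transversality used above, $F$ must be transverse to every rank stratum of $Mat_{(n,p)}(\bC)$ at each $x\neq 0$ in a neighborhood of the origin.

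The main obstacle is the careful verification of the dictionary between the algebraic and geometric conditions in the middle step: one has to check that the $\cO_r$-module presenting $N\mathcal{K}_{\Delta,e}\cdot F$ has its stalk generated at $x$ by $dF(x)(T_x\bC^r)$ modulo the image of the Lie-algebra action $\{(A\cdot F + F\cdot B)(x):A\in\mathfrak{gl}_n,\ B\in\mathfrak{gl}_p\}$, and that this image equals the tangent space to the rank stratum at $F(x)$. This relies on the homogeneity of the $GL_n\times GL_p$-action and the Hilbert--Burch-type description of $\Sigma_i$, and it is precisely the place where the hypothesis ``outside the origin'' is used, since at $0$ the orbit dimension drops and the identification fails.
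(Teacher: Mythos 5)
The paper does not actually prove this theorem; it is quoted from Pereira's thesis \cite{Miriam}, so there is no in-text argument to compare against. Your outline reproduces the standard proof given there (and in the analogous settings of Bruce and Fr\"uhbis-Kr\"uger): Damon's theorem for the geometric subgroup $\mathcal{G}$ reduces finite determinacy to the extended normal space $N\mathcal{G}_e\cdot F$ having finite dimension over $\bC$, this normal space is the stalk at $0$ of a coherent sheaf $\cN$, and coherence converts finite length at $0$ into vanishing of the stalks on a punctured neighborhood, which is then identified pointwise with transversality to the rank strata. Two points in your sketch deserve care. First, the identification of the vanishing of $\cN_x$ with transversality at $x$ rests on the fact that the tangent space at a matrix $A$ to its $GL_n(\bC)\times GL_p(\bC)$-orbit, namely $\{MA+AN\}$, is exactly the tangent space to the rank stratum through $A$ (the orbits \emph{are} the strata); this is the actual content of the lemma, it has nothing to do with Hilbert--Burch, and in the $\mathcal{K}_{\Delta}$ formulation one additionally needs that the logarithmic vector fields of the generic determinantal variety are generated by these linear fields --- a genuine, known computation that you assert rather than establish. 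Second, your closing explanation of where ``outside the origin'' enters is off: the dictionary between vanishing of $\cN_x$ and transversality does not ``fail'' at $0$; rather, finite length of $\cN_0$ only forces the support of $\cN$ to be contained in $\{0\}$, so transversality is obtained (and required) only on the punctured neighborhood, while at the origin itself it typically fails because the stratum through $F(0)$ is too small.
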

%\noindent\textbf{Proof.} Citar...

It follows that if $F$ is a $n\times p$ matrix with entries in the maximal ideal of ${\mathcal{O}}_r$, defining an isolated singularity, then $F$ is $\mathcal{G}$-finitely determined.
Moreover if $F$ is $\mathcal{G}$-finitely determined, then the germ of $X$ at a singular
point is holomorphic to either the product of $\Sigma_t$ with an affine space or a transverse slice of $\Sigma_t$. This motivates the following definition (\cite{EG}):

\begin{definition} A point $x\in X = F^{−1}(\Sigma_t)$ is called essentially non-singular if, at the point $x$, the map $F$ is transversal to the corresponding stratum of the variety $\Sigma_t$. A germ $(X, 0)\subset (\mathbb{C}^r, 0)$ of a determinantal variety has an essentially isolated singular point at the origin (or is an essentially isolated determinantal singularity: EIDS) if it has
only essentially non-singular points in a punctured neighborhood of the origin in $X$.
\end{definition}

If $X=F^{-1}(\Sigma_t)$ then a
perturbation of $X$ is  obtained by perturbing the entries of $F$. This yields an unfolding of $F$, and if $X$ is an EIDS then happens to also give a deformation of $X$ which is transverse to the strata of $Hom(\mathbb{C}^n,\mathbb{C}^{p})$.

In the particular case where $X=F^{-1}(\Sigma_t)$ is   Cohen-Macaulay of codimension $2$, it is a consequence of the Auslander-Buchsbaum formula and the Hilbert-Burch 
Theorem that any deformation of $X$ can be
given as a perturbation of the presentation matrix (see \cite{FK}, pg 3994). Therefore we can
study these varieties and their deformations using their representation matrices and we can 	express the normal module $N(X)$ in terms of matrices.

%As a consequence of Sard's Theorem, it follows that complete intersections are smoothable; moreover the base of their semi-universal deformations is smooth  whence the existence and uniqueness of the smoothing hold for them. 
%For determinantal singularities, the existence and uniqueness of the smoothing do not occur in general (see \cite{greuel}). In particular, it follows from Wahl's result (see \cite{Wahl}) that if $(X; 0)$ is Cohen-Macaulay with
%codimension less than or equal to 2 and $dim(X; 0)\leq 3$, then $(X; 0)$ admits a
%smoothing. Moreover, there is no obstruction for lifting second-order deformations,
%the basis of the semi-universal deformation is smooth (\cite {FK}).

%But the following result was
%proved by Wahl:
%
%\begin{theorem}(\cite{Wahl}, pg. 241)
%Let $(X,0)$ be a determinantal variety with isolated singularity at
%the origin defined by $t\times t$ minors of an $n\times p$ matrix
%$M$, whose entries are complex analytic functions on $U\subset\mathbb{C}^r$, $2\leq t\leq n\leq p$. If
%$dim(X)< n + p - 2t + 3$, then $X$ has a smoothing.
%\end{theorem}

\section{Determinantal Varieties and Bi-Lipschitz Equisingularity}

Let us recall some definitions and fix some notations. Our main reference for these informations is \cite{G1}.

In this paper we  work with one parameter deformations and unfoldings. The parameter space will be denoted by $Y=\bC\equiv \bC\times 0$.

\begin{definition}\label{D2.1}
Let $h\in\cO_{\bC^N}$. The {\bf double of }$h$ is the element denoted by $h_D\in\cO_{\bC^{2N}}$ defined by the equation $$h_D(z,z'):=h(z)-h(z')$$. 

If $h=(h_1,...,h_r)$ is a map, with $h_i\in\cO_{\bC^N}$, $\forall i$, then we define $I_D(h)$ as the ideal of $\cO_{\bC^{2N}}$ generated by $\{(h_1)_D,...,(h_r)_D\}$.
\end{definition}

\vspace{0,5cm}

Now we get a relation between the integral closure of the double and the property that the canonical vector field induced by a one parameter unfolding be Lipschitz.

Let $\tilde{F}: \bC\times\bC^q\longrightarrow \bC\times\bC^n$ be an analytic map, which is a homeomorphism onto its image, and such that we can write $\tilde{F}(y,x)=(y,\tilde{f}(y,x))$, with $\tilde{f}(y,x)=(\tilde{f}_1(y,x),...,\tilde{f}_n(y,x))$. Let us denote by 

$$\frac{\partial}{\partial y} + \sum\limits_{j=1}^{n}\frac{\partial \tilde{f}_j}{\partial y}\cdot\frac{\partial}{\partial z_j}$$

\noindent the vector field $v: \tilde{F}(\bC\times\bC^q)\longrightarrow \bC\times\bC^n$ given by $$v(y,z)=(1,\frac{\partial \tilde{f}_1}{\partial y}(\tilde{F}^{-1}(y,z)),...,\frac{\partial \tilde{f}_n}{\partial y}(\tilde{F}^{-1}(y,z))).$$

Before stating the result, let us recall the equivalence of the four statements in Theorem 2.1 of \cite{LT}, which goes back to the famous seminars of Teissier and Lejeune given at the \'Ecole Polytechnique in the 1970s on integral dependence in complex analytic geometry.

\begin{theorem}
	Let $(X,\cO_X)$ be a reduced complex analytic space, $x\in X$, and let $\cI$ be a coherent sheaf of $\cO_X-$ideals. Denote $I=\cI_x$ the stalk of $\cI$ on $x$, which is an ideal of $\cO_{X,x}$. Let $h\in\cO_{X,x}$. Suppose that $\cI$ defines a nowhere dense closed subset of $X$. The following are equivalent:
	
	\begin{enumerate}
		\item $h$ is integral over $I$;
		
		\item There exist a neighborhood $U$ of $x$, a positive real number $C$, representatives of the space germ $X$, the function germ $h$, and generators $g_1,...,g_m$ of $I$ on $U$, which we identify with the corresponding germs, so that $$\left\|h(z)\right\|\leq C \max \left\{ \left\|g_1(z)\right\|,...,\left\|g_m(z)\right\|\right\},$$ for all $z\in U$;
		
		\item For all analytic path germs $\phi: (\bC,0)\rightarrow (X,x)$, the pullback $\phi^*(h)$ is contained in the ideal generated by $\phi^*(I)$ in the local ring $\cO_{\bC,0}$;
		
		\item Let $NB$ denote the normalization of the blowup of $X$ by $I$, $\bar{D}$ the pullback of the exceptional divisor of the blowup of $X$ by $I$. Then, for any component $C$ of the underlying set of $\bar{D}$, the order of vanishing of the pullback of $h$ to $NB$ along $C$ is greater than or equal to the order of the divisor $\bar{D}$ along $C$.
	\end{enumerate} 
\end{theorem}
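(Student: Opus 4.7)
The plan is to prove the four conditions equivalent by establishing the cycle $(1) \Rightarrow (2) \Rightarrow (3) \Rightarrow (4) \Rightarrow (1)$. Each arrow has a distinct flavour: the first is elementary real analysis, the second is a direct pullback argument, the third is a geometric construction using transverse curves on the normalized blowup, and the last, which I expect to be the main obstacle, is a commutative-algebra statement about the normalized Rees algebra.

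For $(1) \Rightarrow (2)$, suppose $h$ satisfies an integral equation $h^k + a_1 h^{k-1} + \cdots + a_k = 0$ with $a_i \in I^i$. Writing each $a_i$ as an $\cO_{X,x}$-linear combination of degree-$i$ monomials in the chosen generators $g_1,\ldots,g_m$ yields local bounds $|a_i(z)| \leq M_i \max_j|g_j(z)|^i$ on a suitable neighbourhood of $x$. Dividing the integral equation by $(\max_j|g_j(z)|)^k$ and solving the resulting polynomial inequality for $|h|/\max_j|g_j|$ gives the constant $C$. For $(2) \Rightarrow (3)$, pull the inequality back along an analytic path germ $\phi:(\bC,0)\to(X,x)$; since $\cO_{\bC,0}$ is a discrete valuation ring, the estimate $|h\circ\phi(t)| \leq C \max_j|g_j\circ\phi(t)|$ forces $\mathrm{ord}_0(\phi^*h) \geq \min_j \mathrm{ord}_0(\phi^*g_j)$, which is precisely the containment $\phi^*h \in \phi^*I\cdot\cO_{\bC,0}$.

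For $(3) \Rightarrow (4)$, fix a component $C$ of $\bar D$ on the normalized blowup $\pi:NB\to X$. Choose a point $p$ that is smooth on $NB$ and lies only on $C$ (avoiding other components and the normalization locus), and pick an analytic curve germ $\tilde\phi:(\bC,0)\to(NB,p)$ transverse to $C$ at $p$. Setting $\phi:=\pi\circ\tilde\phi$, transversality converts orders along $C$ into orders at $0\in\bC$: one gets $\mathrm{ord}_0(\phi^*h) = \mathrm{ord}_C(\pi^*h)$ and $\min_j \mathrm{ord}_0(\phi^*g_j) = \mathrm{ord}_C(I\cdot\cO_{NB}) = \mathrm{ord}_C(\bar D)$, so (3) yields (4) directly.

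The delicate step is $(4) \Rightarrow (1)$. By construction of $NB$, the ideal $I\cdot\cO_{NB}$ is invertible and equal to $\cO_{NB}(-\bar D)$. Condition (4) says that $\pi^*h$ vanishes along each component of $\bar D$ to order at least that of $\bar D$ there, so $\pi^*h$ is a global section of $\cO_{NB}(-\bar D)=I\cdot\cO_{NB}$. I would then invoke the classical identification of the integral closure $\bar I$ with the degree-one piece of the normalization of the Rees algebra $R(I)=\bigoplus_{n\geq 0} I^n$: because $\pi$ is proper and $NB$ is normal, the graded ring $\bigoplus_{n\geq 0}\pi_*(\cO_{NB}(-n\bar D))$ is integral over $R(I)$, and its degree-one part coincides with $\bar I$. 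This places $h$ in $\bar I$, closing the cycle. The main conceptual work is concentrated precisely in this reduction, which is essentially the valuative criterion of integral dependence; the remaining implications are, by comparison, bookkeeping.
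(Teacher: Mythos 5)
First, a point of context: the paper does not prove this statement at all --- it is quoted verbatim as Theorem~2.1 of Lejeune-Jalabert and Teissier \cite{LT}, so there is no in-paper argument to compare yours against. Judged on its own terms, your cycle $(1)\Rightarrow(2)\Rightarrow(3)\Rightarrow(4)\Rightarrow(1)$ is the classical route, and the steps $(1)\Rightarrow(2)$, $(2)\Rightarrow(3)$ and $(4)\Rightarrow(1)$ are correctly sketched (for $(1)\Rightarrow(2)$ you should note separately that $h$ vanishes wherever all the $g_j$ do, which follows from the integral equation; for $(4)\Rightarrow(1)$ the reduction to the normalized Rees algebra is the standard and correct argument).

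The genuine gap is in $(3)\Rightarrow(4)$. Condition $(3)$ quantifies only over path germs $\phi:(\bC,0)\rightarrow(X,x)$, i.e.\ arcs \emph{centered at $x$}. Your construction picks a point $p$ on the component $C$ that is generic in $C$ (smooth on $NB$, off the other components), and the composite $\phi=\pi\circ\tilde\phi$ is then centered at $\pi(p)$, which is a point of $V(I)$ but in general is \emph{not} $x$: the components of $\bar D$ lie over $V(\cI)$, whose components through $x$ are typically positive-dimensional, so the generic point of $C$ does not map to $x$. Hence condition $(3)$ as stated simply does not apply to your arc, and the implication is not established. Fixing this is the genuinely delicate part of the Lejeune--Teissier proof: one must either work with a point $p\in C\cap\pi^{-1}(x)$ --- where $p$ need not be smooth on $NB$, may lie on several components, and where a transverse arc only gives $\mathrm{ord}_0(\tilde\phi^*\pi^*h)\geq\mathrm{ord}_C(\pi^*h)$ with possibly strict inequality coming from the vanishing at $p$ of the ``unit part'' $\pi^*h/t_C^{\,e}$, so that a genericity argument among arcs through $p$ is needed to get equality for both $h$ and the generators of $I$ --- or restructure the cycle (e.g.\ prove $\neg(1)\Rightarrow\neg(3)$ directly by exhibiting a bad arc centered at $x$). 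As written, the step would fail, and since it is the only arrow out of $(3)$ in your cycle, the equivalence of $(3)$ with the other conditions is left unproved.
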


Now we are able to state our result.

\begin{proposition}
The vector field $\frac{\partial}{\partial y} + \sum\limits_{j=1}^{n}\frac{\partial \tilde{f}_j}{\partial y}\cdot\frac{\partial}{\partial z_j}$ is Lipschitz if and only if $$I_D(\frac{\partial \tilde{F}}{\partial y})\subset\overline{I_D(\tilde{F})}.$$
\end{proposition}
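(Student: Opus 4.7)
The plan is to translate both sides of the equivalence into the same algebraic inequality on $\mathbb{C}^{2(q+1)}$ and then invoke $(1)\Leftrightarrow(2)$ of the preceding theorem, with ambient space $X = \mathbb{C}^{2(q+1)}$, ideal $I = I_D(\tilde{F})$, and $h$ running through the generators of $I_D(\partial\tilde{F}/\partial y)$.

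First I would expand $v(y,z) - v(y',z')$. Writing $p := \tilde{F}^{-1}(y,z) = (y,x)$ and $p' := \tilde{F}^{-1}(y',z') = (y',x')$, the first coordinate of the difference vanishes and the remaining $n$ coordinates are exactly the doubles $(\partial \tilde{f}_j/\partial y)_D(p,p')$. These form a generating set for $I_D(\partial\tilde{F}/\partial y)$, since the double coming from the first component of $\tilde{F}$ is $1-1=0$. Similarly, $(y,z)-(y',z')$ has coordinates $y-y'$ and $(\tilde{f}_j)_D(p,p')$, which generate $I_D(\tilde{F})$. Because $\tilde{F}$ is a homeomorphism onto its image, $v$ being Lipschitz on the image is equivalent to the existence of a constant $C > 0$ with
\[ \max_j \bigl|(\partial\tilde{f}_j/\partial y)_D(p,p')\bigr| \;\le\; C \max\bigl\{|y-y'|,\; |(\tilde{f}_j)_D(p,p')|\bigr\} \]
on a neighborhood of $0$ in $\mathbb{C}^{2(q+1)}$; passage between the Euclidean and the $\ell^\infty$ norm only affects the constant.

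This is precisely statement (2) of the theorem applied to each generator $h = (\partial\tilde{f}_j/\partial y)_D$ and the ideal $I = I_D(\tilde{F})$. The hypothesis of the theorem is satisfied because $V(I_D(\tilde{F}))$ lies in the diagonal of $\mathbb{C}^{q+1}\times\mathbb{C}^{q+1}$ (by injectivity of $\tilde{F}$) and is therefore nowhere dense. The equivalence $(1)\Leftrightarrow(2)$ then yields $I_D(\partial\tilde{F}/\partial y) \subset \overline{I_D(\tilde{F})}$, and conversely. The main obstacle is the clean transfer of the Lipschitz condition from the image $\tilde{F}(\mathbb{C}\times\mathbb{C}^q)$ to the source $\mathbb{C}\times\mathbb{C}^q$; this is exactly what the hypothesis that $\tilde{F}$ be a homeomorphism onto its image is designed to handle, and once performed the rest of the argument is just an unpacking of definitions.
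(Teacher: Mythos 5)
Your proposal is correct and takes essentially the same route as the paper: expand $v(y,z)-v(y',z')$ into the doubles $(\partial\tilde f_j/\partial y)_D$, identify the Lipschitz inequality with statement (2) of the Lejeune--Teissier theorem applied to each generator with $I=I_D(\tilde F)$, and use the homeomorphism hypothesis to pass between the image and the source. Your explicit check that $I_D(\tilde F)$ cuts out a nowhere dense set (so the theorem applies) is a detail the paper leaves implicit.
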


\begin{proof}

Since we are working in a finite dimensional $\bC$-vector space then all the norms are equivalent. For simplify the argument, we use the notation $\Vert . \Vert$ for the \textit{maximum norm} on $\bC\times\bC^q$ and $\bC\times\bC^n$, i.e, $\Vert (x_1,...,x_{n+1})\Vert = \max_{i=1}^{n+1}\{\Vert x_i\Vert\}$.

Suppose the canonical vector field is Lipschitz. By hypothesis there exists a constant $c>0$ such that $$\parallel v(y,z)-v(y',z') \parallel \leq c\parallel (y,z)-(y',z') \parallel$$ $\forall (y,z),(y',z')\in U$, where $U$ is an open subset of  $\tilde{F}(\bC\times\bC^q)$.

\vspace{0,3cm}

Thus, given $(y,x),(y',x')\in \tilde{F}^{-1}(U)$, and applying the above inequality on these points, we get $$\parallel (\frac{\partial \tilde{f}_j}{\partial y})_D(y,x,y',x') \parallel \leq c\parallel \tilde{F}(y,x)-\tilde{F}(y',x') \parallel $$ for all $j=1,...n$. By the previous theorem, each generator of $I_D(\frac{\partial \tilde{F}}{\partial y})$ belongs to $\overline{I_D(\tilde{F})}$.

\vspace{0,5cm}

Now suppose that $I_D(\frac{\partial \tilde{F}}{\partial y})\subset\overline{I_D(\tilde{F})}$.  Using the hypothesis and again the Lejeune-Teissier Theorem, for each $j\in\{1,...n\}$ there exists a constant $c_j>0$ and an open subset $U_j\subset\bC\times\bC^q$ such that $$\parallel (\frac{\partial \tilde{f}_j}{\partial y})_D(y,x,y',x') \parallel \leq c_j\parallel \tilde{F}(y,x)-\tilde{F}(y',x') \parallel $$

$\forall (y,x),(y',x')\in U_j$. Take $U:=\bigcap\limits_{j=1}^{n}U_j$, $c:=max\{c_j\}_{j=1}^{n}$ and $V:=\tilde{F}(U)$, which is an open subset of $\tilde{F}(\bC\times\bC^q)$, since $\tilde{F}$ is a homeomorphism onto its image. Hence, $$\parallel v(y,z)-v(y',z') \parallel \leq c\parallel (y,z)-(y',z') \parallel$$ $\forall (y,z),(y',z')\in V$.

Therefore, the vector field $\frac{\partial}{\partial y} + \sum\limits_{j=1}^{n}\frac{\partial \widetilde{f}}{\partial y}\cdot\frac{\partial}{\partial z_j}$ is Lipschitz .

\end{proof}

Now, we have an application to a special case of determinantal varieties.

\begin{proposition}\label{2.2}
Suppose that $\tilde{F}:\bC\times\bC^q\longrightarrow \bC\times Hom(\bC^m,\bC^n)$ is an analytic map and a homeomorphism onto its image, and suppose we can write $$\tilde{F}(y,x)=(y,F(x)+y\theta(x)).$$

\begin{enumerate}
\item [a)] The vector field $\frac{\partial}{\partial y} + \sum\limits_{j=1}^{n}\frac{\partial \tilde{f}_j}{\partial y}\cdot\frac{\partial}{\partial z_j}$ is Lipschitz if, and only if, $$I_D(\theta)\subset\overline{I_D(\tilde{F})}.$$

\item [b)] If $\theta$ is constant then the vector field $\frac{\partial}{\partial y} + \sum\limits_{j=1}^{n}\frac{\partial \tilde{f}_j}{\partial y}\cdot\frac{\partial}{\partial z_j}$ is Lipschitz.
\end{enumerate} 
\end{proposition}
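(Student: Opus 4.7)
The plan is to reduce Proposition~\ref{2.2} to the preceding unnamed proposition (which asserts that the canonical vector field is Lipschitz if and only if $I_D(\partial\tilde F/\partial y)\subset\overline{I_D(\tilde F)}$) by computing $I_D(\partial\tilde F/\partial y)$ explicitly for the special form $\tilde F(y,x)=(y,F(x)+y\theta(x))$. No analytic estimates or blowup arguments are needed; part~(a) is an unpacking and part~(b) is an immediate consequence.

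First I would write $\tilde F$ in components. Indexing the matrix entries of $F$ and $\theta$ by $j=1,\ldots,mn$, the components of $\tilde F$ are the coordinate $(y,x)\mapsto y$ together with $(y,x)\mapsto F_j(x)+y\theta_j(x)$. Differentiating with respect to $y$ yields the constant $1$ in the first slot and $\theta_j(x)$ in the remaining ones. Hence $I_D(\partial\tilde F/\partial y)$ is generated by the doubles of these functions.

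Next I would compute the doubles. The double of the constant $1$ vanishes identically. For the components $\theta_j(x)$, which depend only on $x$, we have
\[
(\theta_j)_D(y,x,y',x')=\theta_j(x)-\theta_j(x'),
\]
which is precisely the double of $\theta_j$ viewed as a function on $\bC^q$, now regarded as an element of $\cO_{\bC^{2(q+1)}}$ via the pullback along the projection forgetting $y,y'$. Thus $I_D(\partial\tilde F/\partial y)=I_D(\theta)$ as ideals in $\cO_{\bC^{2(q+1)}}$, and part~(a) follows from the previous proposition at once. For part~(b), if $\theta$ is constant then each $(\theta_j)_D$ is identically zero, so $I_D(\theta)=(0)\subset\overline{I_D(\tilde F)}$ trivially, and part~(a) delivers the Lipschitz property.

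The only bookkeeping point to be careful about is the identification of $I_D(\theta)$, a priori an ideal in $\cO_{\bC^{2q}}$, with its image in $\cO_{\bC^{2(q+1)}}$; but since its generators are independent of $y,y'$, this embedding is canonical and compatible with integral closure, so nothing is lost in invoking the previous proposition. Consequently the ``main obstacle'' is essentially nonexistent: the work is already concentrated in the preceding proposition, and Proposition~\ref{2.2} is its direct corollary in the unfolding format $F(x)+y\theta(x)$.
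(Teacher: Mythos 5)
Your proposal is correct and follows essentially the same route as the paper: part (a) is obtained from the preceding proposition via the identity $\partial\tilde f/\partial y=\theta$, and part (b) follows because the doubles of the components of a constant $\theta$ vanish, so $I_D(\theta)$ is the zero ideal. Your additional bookkeeping (the vanishing double of the first component and the identification of $I_D(\theta)$ inside $\cO_{\bC^{2(q+1)}}$) only makes explicit what the paper leaves implicit.
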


\begin{proof}
(a) It is a straightforward consequence of the last proposition and the equality $\frac{\partial \tilde{f}}{\partial y}=\theta$.

(b) Since $\theta$ is constant then the doubles of the components of $\theta$ are all zero, so $I_D(\theta)$ is the zero ideal, which ensures the inclusion $$I_D(\theta)\subset\overline{I_D(\tilde{F})}.$$
\end{proof}

\begin{remark}
In \cite{ANOT} and \cite{NOT}, the authors consider a one parameter deformation with a constant $\theta$. As showed above, for all these deformations the canonical vector field is Lispchitz.

In Example \ref{E1} we see a case where the deformation does not come from a constant $\theta$, and the canonical vector field remains Lipschitz. In Example \ref{E2} we have another deformation that does not come from a constant $\theta$ where the canonical vector field is not Lipschitz.

As we have seen, the canonical vector field is naturally associated to the 1-unfolding of the variety. However, its behaviour for the Lipschitz equisingularity is not the same. This behaviour depends on the type of the normal form, as we will see later. 

\end{remark}

%We can 	express the normal module $N(X)$ in terms of matrices, and the space of 

The first order deformations $T^1_{X}$ can be identified with $\frac{Mat_{\textit{(n,p)}}({\mathcal{O}}_r)}{\mathcal{T}\mathcal{G}F}$, where $\mathcal{T}\mathcal{G}F$ is the extended $\mathcal{G}$-tangent space of the matrix $F$  (Lemma 2.3, \cite{FN}). Hence we can treat the base of the semi-universal deformation using matrix representation and $F$ is $\mathcal{G}$-finitely determined if and only if $T^1_{X}$ is a finite dimensional module. From now on, the element $\theta$ is taken as an element of the space of the first order deformations $T^1_{X}\cong \frac{Mat_{\textit{(n,p)}}({\mathcal{O}}_r)}{\mathcal{T}\mathcal{G}F}$.

\begin{example}\label{E1}
	Consider $$F=\left(
\begin{matrix}
w^l   &  y  &   x   \\
z     &  w  &   y^k  \\
\end{matrix}\right)$$

\noindent with $l,k\geq 2$, which is one of the normal forms obtained in \cite{FN}. Consider the matrix of deformation $$\theta=\left(
\begin{matrix}
\sum\limits_{i=0}^{l-1}w^i   &    0    &     0    \\
0                          &    0    &    \sum\limits_{j=0}^{k-1}y^j
\end{matrix}\right)$$

\noindent and $\tilde{F}(u,x,y,z,w)=(u,F(x,y,z,w)+u\theta(x,y,z,w)$. Notice that $\theta\in\frac{Mat_{(2,3)}(\cO_4)}{T\mathcal{G}F}$. Then, $I_D(\theta)$ is generated by $\{\sum\limits_{i=0}^{l-1}(w^i-w'^i),\sum\limits_{j=0}^{k-1}(y^j-y'^j)\}$. So, the generators are multiples of $(w-w')$ and $(y-y')$, respectively, and these linear differences belong to $I_D(\tilde{F})$. Therefore, $I_D(\theta)\subset I_D(\tilde{F})$. By Proposition \ref{2.2} we conclude that the canonical vector field is Lipschitz.

\end{example}

\begin{example}\label{E2}
	$$F=\left(
	\begin{matrix}
	z   &  y+w^2  &   x^2   \\
	w^k     &  x  &   y  \\
	\end{matrix}\right)$$
	
\noindent with $k\geq 2$, which is one of the normal forms obtained in \cite{FN}. Consider the matrix of deformation	

$$\theta=\left(
\begin{matrix}
0        &    1    &     xw+w    \\
\sum\limits_{i=0}^{k-1}w^i        &  w       &  w  
\end{matrix}\right)$$

\noindent and $\tilde{F}(u,x,y,z,w)=(u,F(x,y,z,w)+u\theta(x,y,z,w)$. Notice that $\theta\in\frac{Mat_{(2,3)}(\cO_4)}{T\mathcal{G}F}$. Then, $I_D(\theta)$ is generated by $\{w-w',w(x+1)-w'(x'+1),\sum\limits_{i=0}^{k-1}(w^i-w'^i)\}$ amd $I_D(\tilde{F})$ is generated by $\{z-z',(y-y')+w^2-w'^2,x^2-x'^2+u(xw+w)-u'(x'w'+w'),w^k-w'^k+u\sum\limits_{i=0}^{k-1}w^i - u'\sum\limits_{i=0}^{k-1}w'^i,x+uw-x'-u'w',y+uw-y'-u'w'\}$.

Consider the curve $\phi:\bC\longrightarrow (\bC\times\bC^4)\times(\bC\times\bC^4)$ given by $\phi(t)=(0,0,0,0,2t,0,0,0,0,t)$.

Then, $I_D(\theta)\circ\phi$ is the ideal of $\cO_1$ generated by $t$, and $I_D(\tilde{F})\circ\phi$ is the ideal generated by $t^2$. Hence, $I_D(\theta)\circ\phi\nsubseteq I_D(\tilde{F})\circ\phi$ and by the curve criterion for the integral closure of ideals we conclude that $I_D(\theta)\nsubseteq\overline{I_D(\tilde{F})}$. Therefore, Proposition \ref{2.2} ensures that the canonical vector field is not Lipschitz.
	
\end{example}

In \cite{FN}, the authors present a classification table for simple isolated Cohen-Macaulay codimension 2 singularities.

%We call \textbf{Table A} that one appears on the statement of the Theorem 3.3 \cite{FN}.

%We call \textbf{Table B} that one appears on the statement of the Theorem 3.5 \cite{FN}.

\begin{theorem}
	Consider $X$ a variety given by some $F:\bC^q\rightarrow Hom(\bC^3,\bC^2)$ and a semi-universal unfolding $\tilde{F}:\bC\times\bC^{q}\rightarrow \bC\times Hom(\bC^3,\bC^2)$, $q\in\{4,5\}$ as in \ref{2.2}, where $\theta\in\frac{Mat_{(2,3)}(\cO_q)}{T\mathcal{G}F}$.
	Suppose that $X$ is a simple isolated Cohen-Macaulay variety of codimension 2.
	
	If $F$ is of 1-jet-type $J_{q,k}$ from Lemma 3.2 of \cite{FN} then the canonical vector field is Lipschitz, otherwise it is not.
	
\end{theorem}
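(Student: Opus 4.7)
The plan is to reduce the statement to a case analysis on the list of normal forms of simple ICMC2 singularities in $\bC^q$ for $q\in\{4,5\}$ recorded in the classification of \cite{FN}, and for each normal form $F$ to test Proposition \ref{2.2}(a) against every basis direction $\theta$ of $T^1_X \cong Mat_{(2,3)}(\cO_q)/T\mathcal{G}F$. The ``Lipschitz for every $1$-parameter sub-unfolding'' part of the statement is equivalent to requiring that the inclusion $I_D(\theta)\subset \overline{I_D(\tilde{F})}$ hold for every such $\theta$, while the negative part only requires a single offending direction. The proof therefore splits into a positive step for the $J_{q,k}$ families and a negative step for every other $1$-jet type.

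For the positive step, the defining feature of a $J_{q,k}$ normal form, Example \ref{E1} being the paradigm, is that the $1$-jet of $F$ displays each of the non-modal source variables as a linear entry of the matrix. Consequently each linear difference $x_i - x_i'$ appears, up to higher order corrections, as a generator of $I_D(\tilde{F})$. I would fix representatives of a basis of $T^1_X$ whose entries are polynomials in those variables that occur linearly in the $1$-jet; this is possible because the extended tangent space $T\mathcal{G}F$ already absorbs the monomials in the "non-linear" variables for these families. The identity $a^j-b^j = (a-b)(a^{j-1}+\cdots+b^{j-1})$ then writes the double of each entry of $\theta$ as a multiple of a generator of $I_D(\tilde{F})$, so $I_D(\theta)\subset I_D(\tilde{F})\subset \overline{I_D(\tilde{F})}$ and Proposition \ref{2.2}(a) gives the Lipschitz property.

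For the negative step, I would mimic Example \ref{E2} in each of the remaining $1$-jet types. For such a type there is at least one source variable, call it $w$, that does not appear linearly in the $1$-jet; every generator of $I_D(\tilde{F})$ is then forced to vanish to order at least two along any curve concentrated in the $w$-slot. On the other hand, the normal form has a basis element $\theta$ of $T^1_X$ whose entries contain $w$ linearly, so $I_D(\theta)$ contains $w-w'$. Pulling back along an arc $\phi(t)=(0,\ldots,0,\alpha t,0,\ldots,0,\beta t)$ with $\alpha\neq\beta$ supported on the two copies of the $w$-coordinate yields $\phi^{\ast}I_D(\theta)=(t)$ but $\phi^{\ast}I_D(\tilde{F})\subset (t^2)$, violating criterion (3) of the Lejeune--Teissier theorem, hence ruling out Lipschitz via Proposition \ref{2.2}(a).

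The bookkeeping of the remaining families is what I expect to be the main obstacle, because the classification of \cite{FN} includes several parameterised families and one needs, on the one hand, to choose generators of $T^1_X$ that manifestly have the polynomial structure required by the positive step, and on the other hand, for each non-$J_{q,k}$ family, to locate a basis element $\theta$ and a compatible test arc $\phi$ that together detect the failure. The substance of the argument, however, is the combinatorial observation underlying both steps: $J_{q,k}$ is precisely the class of $1$-jet types in which every source variable occurs linearly as an entry of $F$, which is the exact condition ensuring that the doubles of every admissible $\theta$ are controlled by the doubles of $\tilde{F}$.
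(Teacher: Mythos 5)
Your proposal follows the paper's proof in all essentials: both directions hinge on the observation that $J_{q,k}$ is exactly the class of $1$-jet types in which every source coordinate occurs as a linear, unperturbed entry of $\tilde{F}$, the positive case is settled by $I_D(\theta)\subseteq I_\Delta\subseteq I_D(\tilde{F})$ together with Proposition \ref{2.2}, and the negative case by the same test arc concentrated in a coordinate missing from the $1$-jet. The only cosmetic difference is in the positive step, where the paper avoids your choice of representatives and the telescoping factorization by noting that the double of \emph{any} function vanishes on the diagonal and hence already lies in $I_\Delta$.
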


\begin{proof}
	Suppose that $F$ is of $1$-jet-type from Lemma 3.2 of \cite{FN}. Since $\theta\in\frac{Mat_{(2,3)}(\cO_q)}{T\mathcal{G}F}$ then the $q$ order $1$ entries  of the matrix $F$ stay unperturbed, thus the differences of the monomial generators of the maximal ideal are in $I_D(\tilde{F})$. In particular the ideal $I_\Delta$ from the diagonal satisfies the inclusion $I_\Delta\subseteq I_D(\tilde{F})$. Let $\theta_i$, $i\in\{1,...,6\}$ be the components of $\theta$. Notice that every $(\theta_i)_D$ vanishes on the diagonal $\Delta$ which implies that all the generators of $I_D(\theta)$ belong to $I_\Delta$. Therefore, $I_D(\theta)\subseteq I_\Delta\subseteq I_D(\tilde{F})$ and the Proposition \ref{2.2} ensures that the canonical vector field is Lipschitz. 
	
	Suppose the opposite. In this case, one of the generators  of the maximal ideal is not an entry of the matrix $F$. Without loss of generality, we may assume this is the first coordinate $x$. Since $\tilde{F}$ is a semi-universal unfolding then $x-x'$ certainly appears as a part of a generator set of $I_D(\theta)$. Take the curve $\phi:(\bC,0)\rightarrow (\bC\times \bC^q)\times(\bC\times \bC^q)$ given by $\phi(t)=(0,2t,0,...,0,t,0,...)$. Then $I_D(\tilde{F})\circ\phi$ is generated by $t^m$, for some $m>1$. Since $(x-x')\circ\phi=t$ then $(x-x')\circ\phi\notin I_D(\tilde{F})\circ\phi$, and by the curve criterion we conclude that $x-x'\notin\overline{I_D(\tilde{F})}$. Therefore, $I_D(\theta)\nsubseteq \overline{I_D(\tilde{F})}$ and the Proposition \ref{2.2} ensures that the canonical vector field is not Lipschitz. 
	
\end{proof}

\begin{remark}
We can rephrase the condition on the jet-type by stating:

\begin{enumerate}

\item[a)] {The canonical vector field is Lipschitz if the ideal of 1-minors of the matrix of $X$ defines a reduced point.}
\item[b)] {The canonical vector field is not Lipschitz if the ideal of 1-minors of the matrix of $X$ defines a fat point.}
\end{enumerate}

\end{remark}

\vspace{2cm}

\vspace{3cm}

\begin{small}
	
{\sc Thiago F. da Silva
	
	Instituto de Ci\^encias Matem\'aticas e de Computa\'c\~ao - USP \\
	Av. Trabalhador S\~ao Carlense, 400 - Centro, 13566-590 - S\~ao Carlos - SP, Brazil, thiago.filipe@usp.br
	
	Departamento de Matem\'atica, Universidade Federal do Esp\'irito Santo \\
	Av. Fernando Ferrari, 514 - Goiabeiras, 29075-910 - Vit\'oria - ES, Brazil, thiago.silva@ufes.br}
	
\vspace{1cm}	
	
{\sc N. G. Grulha Jr. 
	
	Instituto de Ci\^encias Matem\'aticas e de Computa\'c\~ao - USP\\
	Av. Trabalhador S\~ao Carlense, 400 - Centro, 13566-590 - S\~ao Carlos - SP, Brazil, njunior@icmc.usp.br}

\vspace{1cm}
	
{\sc M. S. Pereira 
	
	Departamento de Matem\'atica, Universidade Federal da Para\'iba, 58.051-900  Jo\~ao Pessoa, Brazil, miriam@mat.ufpb.br}

\end{small}

\end{document}